\newtheorem{theorem}{Theorem}
\newtheorem{corollary}{Corollary}
\theoremstyle{remark}
\newtheorem{remark}{Remark}
\theoremstyle{definition}
\newtheorem{definition}{Definition}
\newtheorem{example}{Example}
\newcommand{\T}{\mathbb{T}}
\newcommand{\R}{\mathbb{R}}
\newcommand{\Z}{\mathbb{Z}}
\newcommand{\rar}{\mbox{$\rightarrow$}}
\begin{document}

\title{Noether's Theorem on Time Scales\thanks{Partially presented
at the 6th International ISAAC Congress, August 13 to August 18, 2007,
Middle East Technical University, Ankara, Turkey.}}

\author{Zbigniew Bartosiewicz${}^{\dag}$\\
        \texttt{bartos@pb.bialystok.pl}
        \and
        Delfim F.~M.~Torres${}^{\ddag}$\\
        \texttt{delfim@ua.pt}}

\date{${}^{\dag}$Faculty of Computer Science\\
      Bia\l ystok Technical University\\
      15-351 Bia\l ystok, Poland\\\medskip
      ${}^{\ddag}$Department of Mathematics\\
      University of Aveiro\\
      3810-193 Aveiro, Portugal}

\maketitle


\begin{abstract}
We show that for any variational symmetry of the problem of the
calculus of variations on time scales there exists a conserved
quantity along the respective Euler-Lagrange extremals.
\end{abstract}

\smallskip

\noindent \textbf{Mathematics Subject Classification 2000:} 49K05, 39A12.

\smallskip


\smallskip

\noindent \textbf{Keywords:} calculus of variations, time scales,
Noether's theorem, conservation laws.


\section{Introduction}

The calculus on time scales is a relatively new theory
that unifies and generalizes difference
and differential equations. The theory was initiated
by Stefan Hilger and is being applied to many different fields
in which dynamic processes can be described with discrete or continuous models
\cite{Agarwal,bookBohner,Hilger97}. The calculus of variations
on time scales was initiated in 2004 with the papers \cite{BohnerCV04,CVts}.

The calculus of variations and control theory are disciplines in which there
appears to be many opportunities for application of time scales
\cite{MR2211489,IFACPrague,Rui-Delfim}.
Here we make use of the Euler-Lagrange equations on time scales
\cite{BohnerCV04,comRui:TS:Lisboa07,CVts} to generalize
one of the most beautiful results of the
calculus of variations -- the celebrated Noether's theorem
\cite{GelfandFomin,MR0406752,Sarlet}.
Our Noether-type theorem (Theorem~\ref{theo:tnoe})
unifies and extends the previous formulations of Noether's
principle in the discrete-time and continuous domains
(\textrm{cf.} \cite{Torres03,Torres04} and references therein).
Moreover, it gives answer (Corollary~\ref{cor:NT:Disc}) to an open question
formulated in \cite[p.~216]{Logan} (see also \cite[Remark~12]{Torres03}):
\emph{how to obtain 'energy' integrals for discrete-time problems,
as done in the continuous calculus of variations?}


\section{Preliminaries on time scales}

We give here basic definitions and facts concerning the calculus
on time scales. More information can be found \textrm{e.g.}
in \cite{bookBohner}.

A {\it time scale} $\T$ is an arbitrary nonempty closed subset
of the set $\R$ of real numbers. It is a model of time. Besides standard
cases of $\mathbb{R}$ (continuous time) and $\mathbb{Z}$ (discrete time),
many different models are used. For each time scale $\mathbb{T}$
the following operators are used:
\begin{itemize}

\item the {\it forward jump operator} $\sigma:\T \rar \T$,
$\sigma(t):=\inf\{s \in \T:s>t\}$ for $t<\sup \T$ and
$\sigma(\sup\T)=\sup\T$ if $\sup\T<+\infty$;

\item the {\it backward jump operator} $\rho:\T \rar \T$,
$\rho(t):=\sup\{s \in \T:s<t\}$ for $t>\inf \T$ and
$\rho(\inf\T)=\inf\T$ if $\inf\T>-\infty$;

\item the {\it graininess function} $\mu:\T \rar [0,\infty)$,
$\mu(t):=\sigma(t)-t$.
\end{itemize}

\begin{example}  If $\T=\R$, then for any $t \in \R$,
$\sigma(t)=t=\rho(t)$ and $\mu(t) \equiv 0$.
If $\T=\Z$, then for every $t \in \Z$,
$\sigma(t)=t+1$, $\rho(t)=t-1$ and $\mu(t) \equiv 1$.
\end{example}

A point $t \in \mathbb{T}$ is called:
(i) {\it{ right-scattered}} if $\sigma(t)>t$,
(ii) {\it{right-dense}} if $\sigma(t)=t$,
(iii) {\it {left-scattered}} if $\rho(t)<t$,
(iv) {\it{left-dense}} if $\rho(t)=t$,
(v) {\it {isolated}} if it is both left-scattered
and right-scattered,
(vi) {\it{dense}} if it is both left-dense and right-dense.
If $\sup \T$ is finite and left-scattered,
we set $\mathbb{T}^\kappa :=\mathbb{T}\setminus \{\sup\T\}$.
Otherwise, $\mathbb{T}^\kappa :=\mathbb{T}$.

\begin{definition}
Let $f:\T \rar \R$ and $t \in \T^\kappa$. The {\it delta derivative}
of $f$ at $t$ is the real number $f^{\Delta}(t)$ with the property
that given any $\varepsilon$ there is a neighborhood
$U=(t-\delta,t+\delta) \cap \T$ of $t$ such that
\[|(f(\sigma(t))-f(s))-f^{\Delta}(t)(\sigma(t)-s)| \leq \varepsilon|\sigma(t)-s|\]
for all $s \in U$. We say that $f$ is {\it delta differentiable}
on $\T$ provided $f^{\Delta}(t)$ exists for all $t \in \T^\kappa$.
\end{definition}

We shall often denote $f^\Delta(t)$ by $\frac{\Delta}{\Delta t} f(t)$
if $f$ is a composition of other functions. The delta-derivative
of a function $f:\T \rar \R^n$ is a (column) vector whose components are
delta-derivatives of the components of $f$. For $f:\T \rar X$,
where $X$ is an arbitrary set, we define $f^\sigma:=f\circ\sigma$.

\begin{remark}
If $\T=\R$, then $f:\R \rar \R$ is
delta differentiable at $t \in \R$ if and only if $f$ is differentiable in
the ordinary sense at $t$. Then, $f^{\Delta}(t)=f'(t)$.
If $\T=\Z$, then $f:\Z \rar \R$ is always delta differentiable
at every $t \in \Z$ with $f^{\Delta}(t)=f(t+1)-f(t)$.
\end{remark}

\begin{definition}
A function $f:\mathbb{T} \to \mathbb{R}$ is called {\emph{rd-continuous}}
if it is continuous at the right-dense points in $\mathbb{T}$
and its left-sided limits exist at all left-dense points in $\mathbb{T}$.
A function $f:\mathbb{T} \to \mathbb{R}^n$ is
{\emph{rd-continuous}} if all its components
are rd-continuous.
\end{definition}

The set of all rd-continuous functions is denoted
by \(\mathcal{C}_{rd}\). Similarly, $\mathcal{C}^1_{rd}$
will denote the set of functions from $\mathcal{C}_{rd}$
whose delta derivative belongs to $\mathcal{C}_{rd}$.

A function $F:\mathbb{T}\to \mathbb{R}$ is called an
\emph{antiderivative of $f:\mathbb{T}\to \mathbb{R}$} if it
satisfies \( F^\Delta (t)=f(t), \) for all $t\in \mathbb{T}^\kappa$.
Then, the \emph{indefinite integral} of $f$ is defined by
\(\int f(t)\Delta t=F(t)+C,\) where $C$ is an arbitrary constant.
The \emph{definite integral} of $f$ is defined by
\(\int\limits_{r}^{s} f(t)\Delta t=F(s)-F(r),\)
for all $s,t\in \mathbb{T}$. It is known that every
rd-continuous function has an antiderivative.

\begin{example}
If $\mathbb{T}=\mathbb{R}$, then \[\int\limits_{a}^{b}  f(t) \Delta
t=\int\limits_{a}^{b}  f(t) d t,\] where the integral on the
right hand side is the usual Riemann integral.
If $\mathbb{T}=h\mathbb{Z}$, where $h>0$, then
\[\int\limits_{a}^{b}  f(t) \Delta
t=\sum\limits_{k=\frac{a}{h}}^{\frac{b}{h}-1}  h\cdot f(kh),\] for $a<b$.
\end{example}

We shall need the following properties
of delta derivatives and integrals:
\begin{gather}
(fg)^\Delta=f^\Delta g^\sigma + fg^\Delta \, , \\
f^\sigma=f+\mu f^\Delta \, , \\
\int_{a}^{b} f(\alpha(t))\alpha^\Delta(t)\Delta t = \int_{\alpha(a)}^{\alpha(b)} f(\bar{t})\bar{\Delta}\bar{t} \, ,
\end{gather}
where $\alpha : [a,b]\cap \mathbb{T}\rar\R$ is an increasing
$\mathrm{C}^1_{rd}$ function and its image is a new
time scale (so a new symbol $\bar{\Delta}\bar{t}$
is used in the second integral).


\section{Main results}

There exist several different ways to prove
the classical theorem \cite{MR0406752} of Emmy Noether.
Three different proofs of the classical
Noether's theorem  are reviewed
in \cite[Chap.~1]{PhD-thesisGastao}, a fourth one
can be found in \cite{GelfandFomin}. In this
section we follow one of those proofs,
which is based on a technique of time-reparameterization \cite{Jost:1998}.
While this approach is not so popular for proving the classical Noether's theorem and
many of its extensions (see \textrm{e.g.} \cite{GelfandFomin,Torres04,Brunt}),
it has already shown to be very effective in two generalizations
of the classical result: it has been used in \cite{Torres02} in order
to get a more general Noether's theorem in the optimal control setting;
it has been used in \cite{tncdf} to deal with
problems of the calculus of variations with fractional derivatives
in the Riemann-Liouville sense. Here we use
this technique to prove a Noether-type theorem for problems
of the calculus of variations on time scales.

We consider the fundamental problem of the calculus of variations
on time scales as defined by Bohner \cite{BohnerCV04} (see also
\cite{Ahlbrandt,economics,CVts}):
\begin{equation}
\label{Pt} I[q(\cdot)] = \int_{a}^{b}
L\left(t,q^{\sigma}(t),q^{\Delta}(t)\right)\Delta t \longrightarrow \min ,
\end{equation}
under given boundary conditions $q(a)=q_{a}$, $q(b)=q_{b}$,
where $q^{\sigma}(t)=(q\circ \sigma)(t)$, $q^{\Delta}(t)$
is the \emph{delta derivative}, $t\in \mathbb{T}$,
and the Lagrangian $L :\mathbb{R} \times \mathbb{R}^{n}
\times \mathbb{R}^{n} \rightarrow \mathbb{R}$
is a $\mathrm{C}^1$ function with respect to its
arguments. By $\partial_{i} L$ we will denote the partial
derivative of $L$ with respect to the $i$-th variable, $i = 1,2,3$.
Admissible functions $q(\cdot)$ are assumed to be $\mathrm{C}^1_{rd}$.

\begin{theorem}[See \cite{BohnerCV04}]
\label{Thm:ELts} If $q(\cdot)$ is a minimizer of problem \eqref{Pt},
then $q(\cdot)$ satisfies the following \emph{Euler-Lagrange equation}:
\begin{equation}
\label{eq:el} \frac{\Delta}{\Delta t}\partial_{3} L\left(t,q^\sigma(t),{q}^\Delta(t)\right) =
\partial_{2} L\left(t,q^\sigma(t),{q}^\Delta(t)\right) \, .
\end{equation}
\end{theorem}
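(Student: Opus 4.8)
The plan is to run the classical first-variation argument inside the delta-calculus. Fix a minimizer $q(\cdot)$ and consider the one-parameter family of competitors $q(\cdot)+\varepsilon\,\eta(\cdot)$, where $\eta(\cdot)\in\mathrm{C}^1_{rd}$ is arbitrary subject to $\eta(a)=\eta(b)=0$, so that the boundary conditions are preserved. Put $\phi(\varepsilon):=I[q+\varepsilon\eta]$. Since $q(\cdot)$ is a minimizer, $\phi$ attains a minimum at $\varepsilon=0$, whence $\phi'(0)=0$. Differentiating under the integral sign — legitimate because $L\in\mathrm{C}^1$ and all the functions involved are $\mathrm{C}^1_{rd}$ on the compact set $[a,b]\cap\mathbb{T}$, while $\varepsilon$ is an ordinary real parameter — and using that $(q+\varepsilon\eta)^\sigma=q^\sigma+\varepsilon\eta^\sigma$ and $(q+\varepsilon\eta)^\Delta=q^\Delta+\varepsilon\eta^\Delta$, I obtain
\[
\phi'(0)=\int_a^b\Bigl[\partial_2 L(t,q^\sigma(t),q^\Delta(t))\,\eta^\sigma(t)+\partial_3 L(t,q^\sigma(t),q^\Delta(t))\,\eta^\Delta(t)\Bigr]\Delta t=0 .
\]

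Next I would eliminate the $\eta^\sigma$ term by a time-scale integration by parts. Set $A(t):=\int_a^t\partial_2 L(\tau,q^\sigma(\tau),q^\Delta(\tau))\,\Delta\tau$, which lies in $\mathrm{C}^1_{rd}$ and satisfies $A^\Delta=\partial_2 L(\cdot,q^\sigma,q^\Delta)$ and $A(a)=0$. Applying the product rule $(A\eta)^\Delta=A^\Delta\eta^\sigma+A\,\eta^\Delta$, integrating from $a$ to $b$, and using $\eta(a)=\eta(b)=0$ gives $\int_a^b\partial_2 L\,\eta^\sigma\,\Delta t=-\int_a^b A\,\eta^\Delta\,\Delta t$, so that
\[
\int_a^b\Bigl[\partial_3 L(t,q^\sigma(t),q^\Delta(t))-A(t)\Bigr]\,\eta^\Delta(t)\,\Delta t=0
\]
for every admissible variation $\eta(\cdot)$.

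The decisive step is then a du Bois-Reymond-type fundamental lemma for the calculus of variations on time scales: if $g\in\mathrm{C}_{rd}$ and $\int_a^b g(t)\,\eta^\Delta(t)\,\Delta t=0$ for all $\eta\in\mathrm{C}^1_{rd}$ with $\eta(a)=\eta(b)=0$, then $g$ is constant on $[a,b]^\kappa$. Granting this, $\partial_3 L(t,q^\sigma(t),q^\Delta(t))-A(t)\equiv c$ for some constant $c$; in particular $t\mapsto\partial_3 L(t,q^\sigma(t),q^\Delta(t))$ is delta-differentiable, being $A(\cdot)$ plus a constant, and delta-differentiating the identity, together with $A^\Delta=\partial_2 L(\cdot,q^\sigma,q^\Delta)$, yields precisely \eqref{eq:el}.

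I expect the main obstacle to be the proof of the fundamental lemma itself. The standard continuous-time device is to test with the variation $\eta(t):=\int_a^t(g(\tau)-c)\,\Delta\tau$, where $c:=\frac{1}{b-a}\int_a^b g(\tau)\,\Delta\tau$ is chosen so that $\eta(a)=\eta(b)=0$ and $\eta^\Delta=g-c$; then $0=\int_a^b g\,\eta^\Delta\,\Delta t=\int_a^b(g-c)^2\,\Delta t$, and one must conclude $g\equiv c$ from nonnegativity of the integrand. On a general time scale this last inference has to be examined separately at right-scattered points (where the integral contributes the term $\mu(t)(g(t)-c)^2$ with $\mu(t)>0$) and at right-dense points (where rd-continuity of $g$ and the positivity of the $\Delta$-measure of right-neighborhoods are used); the verification that $\eta$ so defined genuinely belongs to $\mathrm{C}^1_{rd}$ is similarly routine but relies on the antiderivative theory recalled in Section~2. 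The remaining ingredients — differentiation under the integral sign and the integration-by-parts identity — follow directly from the time-scale machinery already in place.
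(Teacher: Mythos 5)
Your argument is correct. Note, however, that the paper itself gives no proof of this theorem: it is quoted from the reference \cite{BohnerCV04}, and your derivation (first variation, time-scale integration by parts via the product rule $(fg)^\Delta=f^\Delta g^\sigma+fg^\Delta$, and the du Bois-Reymond lemma with the test variation $\eta(t)=\int_a^t(g(\tau)-c)\,\Delta\tau$) is essentially the standard proof given there, so there is nothing to compare beyond saying you have correctly reconstructed the cited argument.
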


\begin{definition}[invariance without transforming the time]
\label{def:inv} Let $U$ be a set of $\mathrm{C}^1_{rd}$ functions $q: [a,b] \rightarrow \mathbb{R}^n$. The
functional $I$ is said to be invariant on $U$ under a one-parameter family of state transformations
\begin{equation}
\label{eq:tinf} \bar{q}= q + \varepsilon\xi(t,q) + o(\varepsilon)
\end{equation}
if, and only if,
\begin{equation}
\label{eq:inv} \int_{t_{a}}^{t_{b}} L\left(t,q^\sigma(t),q^{\Delta}(t)\right)\Delta t =
\int_{t_{a}}^{t_{b}}L\left(t,\bar{q}^\sigma(t),\bar{q}^{\Delta}(t)\right)\Delta t
\end{equation}
for any subinterval $[{t_{a}},{t_{b}}] \subseteq [a,b]$ with ${t_{a}},{t_{b}}\in\mathbb{T}$, for any
$\varepsilon$ and for any $q\in U$, where $\bar{q}(t)= q(t) + \varepsilon\xi(t,q(t)) + o(\varepsilon)$.
\end{definition}

\begin{definition}[conservation law]
\label{def:leicon} Quantity $C(t,q,q^\sigma,{q}^\Delta)$ is said to be a \emph{conservation law} for functional
$I$ on $U$ if, and only if, $\frac{\Delta}{\Delta t}C(t,q(t),q^\sigma(t),{q}^\Delta(t))=0$ along all $q\in U$
that satisfy the Euler-Lagrange equation \eqref{eq:el}.
\end{definition}

\begin{theorem}[necessary condition of invariance]
\label{theo:cnsi} If functional $I$ is invariant on $U$ under transformations \eqref{eq:tinf}, then
\begin{equation}
\label{eq:cnsi}
\partial_{2} L\left(t,q^\sigma(t),{q}^\Delta(t)\right)\cdot\xi^\sigma(t,q(t))
+\partial_{3} L\left(t,q^\sigma(t),{q}^\Delta(t)\right)\cdot{\xi}^\Delta(t,q(t)) = 0
\end{equation}
for all $t\in[a,b]$ and all $q\in U$, where $\xi^\sigma(t,q(t))=\xi(\sigma(t)),q(\sigma(t)))$ and
${\xi}^\Delta(t,q(t))=\frac{\Delta}{\Delta t}{\xi}(t,q(t))$.
\end{theorem}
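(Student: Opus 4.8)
The plan is to differentiate the invariance identity \eqref{eq:inv} with respect to the parameter $\varepsilon$ at $\varepsilon = 0$. Since the left-hand side of \eqref{eq:inv} does not depend on $\varepsilon$, the derivative of the right-hand side must vanish. The subtlety on time scales is that we must differentiate under the integral sign and then apply the chain rule to $L(t,\bar q^\sigma(t),\bar q^\Delta(t))$; because $\bar q(t) = q(t) + \varepsilon\,\xi(t,q(t)) + o(\varepsilon)$, we get $\frac{d}{d\varepsilon}\bar q^\sigma(t)\big|_{\varepsilon=0} = \xi^\sigma(t,q(t))$ and $\frac{d}{d\varepsilon}\bar q^\Delta(t)\big|_{\varepsilon=0} = \xi^\Delta(t,q(t))$, the latter using that the delta derivative is a linear operator so it commutes with $\frac{\partial}{\partial\varepsilon}$ applied to the (affine in $\varepsilon$, modulo $o(\varepsilon)$) function $\bar q$. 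Here I would want to note that differentiating $o(\varepsilon)$ terms is harmless because, by the definition of invariance, \eqref{eq:inv} holds for \emph{all} $\varepsilon$, which lets us treat $\varepsilon \mapsto \int_{t_a}^{t_b} L(t,\bar q^\sigma,\bar q^\Delta)\Delta t$ as a differentiable function of $\varepsilon$ whose derivative at $0$ is computed by the chain rule with the $o(\varepsilon)$ contributions dropping out.

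Carrying this out, the first variation gives
\[
\int_{t_a}^{t_b}\left[\partial_2 L\!\left(t,q^\sigma(t),q^\Delta(t)\right)\cdot\xi^\sigma(t,q(t)) + \partial_3 L\!\left(t,q^\sigma(t),q^\Delta(t)\right)\cdot\xi^\Delta(t,q(t))\right]\Delta t = 0
\]
for every subinterval $[t_a,t_b]\subseteq[a,b]$ with endpoints in $\mathbb{T}$. The final step is to pass from this integral identity, valid on all admissible subintervals, to the pointwise identity \eqref{eq:cnsi}. For this I would invoke the standard time-scales analogue of the du Bois-Reymond / fundamental lemma of the calculus of variations: if $g$ is rd-continuous and $\int_{t_a}^{t_b} g(t)\,\Delta t = 0$ for all $t_a,t_b\in\mathbb{T}$ with $t_a\le t_b$, then $g(t)=0$ for all $t\in[a,b]^\kappa$ (extending to all of $[a,b]$ by rd-continuity). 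Since $L\in\mathrm{C}^1$, $q\in\mathrm{C}^1_{rd}$, and $\xi$ is assumed $\mathrm{C}^1$ in its arguments, the bracketed integrand is rd-continuous, so this applies directly.

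The main obstacle I anticipate is the rigorous justification of differentiating under the integral sign and of the chain-rule step $\frac{d}{d\varepsilon}\bar q^\Delta\big|_{\varepsilon=0} = \xi^\Delta$ on a general time scale, together with the careful bookkeeping of the $o(\varepsilon)$ remainders — on $\mathbb{R}$ these are routine, but here one must make sure the remainder terms are uniform enough (e.g. that $\bar q^\Delta$ depends on $\varepsilon$ in a way that is differentiable at $\varepsilon = 0$ with the expected derivative) for the interchange of limits to be legitimate. Everything else is a direct computation plus an appeal to the fundamental lemma on time scales, both of which are standard in this setting (\textrm{cf.} \cite{BohnerCV04,CVts}).
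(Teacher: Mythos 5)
Your argument is correct and rests on the same two ingredients as the paper's proof — differentiation with respect to $\varepsilon$ at $\varepsilon=0$ and the arbitrariness of the subinterval $[t_a,t_b]$ — but you apply them in the opposite order. The paper first localizes: since \eqref{eq:inv} holds for every subinterval with endpoints in $\mathbb{T}$, the integral identity is equivalent to the pointwise identity \eqref{eq:inv1} (this is exactly the fundamental-lemma step you invoke, just performed before any differentiation), and only then does it differentiate the resulting scalar identity in $\varepsilon$ and set $\varepsilon=0$. You instead differentiate under the integral sign first and then pass from the integral identity to the pointwise one. The two routes yield the same computation in the end, but the paper's ordering has the advantage of sidestepping the main technical obstacle you flag: once the integrals are gone, one only needs to differentiate a fixed finite-dimensional composition $\varepsilon\mapsto L\bigl(t,q^\sigma+\varepsilon\xi^\sigma+o(\varepsilon),q^\Delta+\varepsilon\xi^\Delta\bigr)$ at a single point $t$, so no interchange of limit and integral, and no uniformity of the $o(\varepsilon)$ remainder in $t$, is required. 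Your version is equally valid but carries that extra (routine, yet real) justification burden; the rd-continuity observation you make for the fundamental lemma is the same regularity the paper implicitly uses to strip the integral signs.
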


\begin{proof}
Having in mind that condition \eqref{eq:inv} is valid for any subinterval $[{t_{a}},{t_{b}}] \subseteq [a,b]$,
we can rid off the integral signs in \eqref{eq:inv}: equation \eqref{eq:inv} is equivalent to
\begin{equation}
\label{eq:inv1} L\left(t,q^\sigma(t),{q}^\Delta(t)\right) =
L(t,q^\sigma(t)+\varepsilon\xi^\sigma(t,q(t))+o(\varepsilon),{q}^\Delta(t) + \varepsilon{\xi}^\Delta(t,q(t))) \, .
\end{equation}
Differentiating both sides of equation \eqref{eq:inv1} with respect to $\varepsilon$, then setting
$\varepsilon=0$, we obtain equality \eqref{eq:cnsi}.
\end{proof}

\begin{theorem}[Noether's theorem without transforming time]
\label{theo:tnaut} If functional $I$ is invariant on $U$ under the one-parameter family of transformations
\eqref{eq:tinf}, then
\begin{equation}
\label{eq:tnau} C(t,q,q^\sigma,{q}^\Delta) = \partial_{3} L\left(t,q^\sigma,{q}^\Delta\right)\cdot\xi(t,q)
\end{equation}
is a conservation law.
\end{theorem}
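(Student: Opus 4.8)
The plan is to combine the necessary condition of invariance from Theorem~\ref{theo:cnsi} with the Euler-Lagrange equation \eqref{eq:el} via the product rule for the delta derivative. First I would take the candidate conserved quantity $C = \partial_{3} L\left(t,q^\sigma,{q}^\Delta\right)\cdot\xi(t,q)$ and differentiate it along an Euler-Lagrange extremal using the Leibniz rule $(fg)^\Delta = f^\Delta g^\sigma + f g^\Delta$ from the preliminaries. This gives
\begin{equation*}
\frac{\Delta}{\Delta t} C = \left(\frac{\Delta}{\Delta t}\partial_{3} L\right)\cdot \xi^\sigma + \partial_{3} L \cdot \xi^\Delta \, ,
\end{equation*}
where the shift $\sigma$ lands on the $\xi$ factor because that is the factor multiplied by the non-differentiated term in the product rule.

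Next I would substitute the Euler-Lagrange equation \eqref{eq:el}, which asserts $\frac{\Delta}{\Delta t}\partial_{3} L = \partial_{2} L$ along extremals, into the first term. This turns the right-hand side into $\partial_{2} L \cdot \xi^\sigma + \partial_{3} L \cdot \xi^\Delta$, which is precisely the left-hand side of the necessary condition of invariance \eqref{eq:cnsi}. Since $I$ is assumed invariant on $U$, Theorem~\ref{theo:cnsi} tells us this expression vanishes for all $t \in [a,b]$ and all $q \in U$. Hence $\frac{\Delta}{\Delta t} C(t,q(t),q^\sigma(t),q^\Delta(t)) = 0$ along every Euler-Lagrange extremal in $U$, which is exactly the definition (Definition~\ref{def:leicon}) of $C$ being a conservation law.

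I do not expect a serious obstacle here: the argument is a short two-line computation once the two prior results are in hand. The only point requiring a little care is bookkeeping with the forward shift operator $\sigma$ in the product rule — one must apply $(fg)^\Delta = f^\Delta g^\sigma + f g^\Delta$ with $f = \partial_3 L$ and $g = \xi$ (not the other way around) so that the $\xi^\sigma$ and $\xi^\Delta$ terms produced match exactly the terms appearing in \eqref{eq:cnsi}. Everything then collapses by direct substitution, and no additional regularity beyond the $\mathrm{C}^1_{rd}$ hypotheses already imposed is needed, since $\partial_3 L$ evaluated along a $\mathrm{C}^1_{rd}$ extremal is delta-differentiable by the Euler-Lagrange equation itself.
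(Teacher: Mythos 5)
Your proposal is correct and follows essentially the same route as the paper: apply the product rule $(fg)^\Delta = f^\Delta g^\sigma + f g^\Delta$ with $f=\partial_3 L$ and $g=\xi$, substitute the Euler-Lagrange equation \eqref{eq:el} into the first term, and invoke the necessary condition of invariance \eqref{eq:cnsi} to conclude the delta derivative vanishes. The bookkeeping of which factor acquires the shift $\sigma$ is handled exactly as in the paper's computation.
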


\begin{proof}
Using the Euler-Lagrange equation \eqref{eq:el} and the necessary condition of invariance \eqref{eq:cnsi}, we
obtain:
\begin{equation*}
\begin{split}
\frac{\Delta}{\Delta t}&\left(\partial_{3} L\left(t,q^\sigma(t),{q}^\Delta(t)\right)
\cdot \xi(t,q(t))\right) \\
&= \frac{\Delta}{\Delta t}\partial_{3} L\left(t,q^\sigma(t),{q}^\Delta(t)\right) \cdot \xi^\sigma(t,q(t))
+ \partial_{3} L\left(t,q^\sigma(t),{q}^\Delta(t)\right) \cdot {\xi}^\Delta(t,q(t)) \\
&=  \partial_{2} L\left(t,q^\sigma(t),{q}^\Delta(t)\right) \cdot \xi^\sigma(t,q(t)) + \partial_{3}
L\left(t,q^\sigma(t),{q}^\Delta(t)\right) \cdot {\xi}^\Delta(t,q(t)) \\
&= 0 \, .
\end{split}
\end{equation*}
\end{proof}

\begin{remark}
In classical mechanics, for $\mathbb{T}=\mathbb{R}$, $\partial_{3} L\left(t,q^\sigma(t),{q}^\Delta(t)\right)$ is
interpreted as the generalized momentum.
\end{remark}

Let us consider now the one-parameter family of infinitesimal transformations
\begin{equation}
\label{eq:tinf2}
\begin{cases}
\bar{t} = T_\varepsilon(t,q) = t + \varepsilon\tau(t,q) + o(\varepsilon) \, ,\\
\bar{q} = Q_\varepsilon(t,q) = q + \varepsilon\xi(t,q) + o(\varepsilon) \, .\\
\end{cases}
\end{equation}
For a fixed $\varepsilon$ we will drop this index in $T_\varepsilon$ and $Q_\varepsilon$ and write $T$ and $Q$
instead. Let as before $U$ be a set of $\mathrm{C}^1_{rd}$ functions $q: [a,b] \rightarrow \mathbb{R}^n$.
We assume that for every $q\in U$ and every $\varepsilon$, the map $[a,b]\ni t \mapsto \alpha(t):=
T_\varepsilon(t,q(t))\in\mathbb{R}$ is an increasing $\mathrm{C}^1_{rd}$ function and its image is again a time
scale with the forward shift operator $\bar{\sigma}$ and the delta derivative $\bar{\Delta}$. Observe that the following holds:
\begin{equation}
\label{sigma}
\bar{\sigma}\circ\alpha = \alpha \circ \sigma .
\end{equation}
Let $\beta=\alpha^{-1}$. We set $\bar{q}(\bar{t}) := Q\left(\beta(\bar{t}),q(\beta(\bar{t}))\right)$.

\begin{definition}[invariance of $I$]
\label{def:inva} Functional $I$ is said to be invariant on $U$ under the family of  transformations
\eqref{eq:tinf2} if, and only if, for any subinterval $[{t_{a}},{t_{b}}] \subseteq [a,b]$, any $\varepsilon$ and
any $q\in U$
\begin{equation*}
\label{eq:inv2} \int_{t_{a}}^{t_{b}} L\left(t,q^\sigma(t),{q}^\Delta(t)\right)\Delta t =
\int_{T(t_a,q(t_a))}^{T(t_b,q(t_b))}
L\left(\bar{t},\bar{q}^{\bar{\sigma}}(\bar{t}),{\bar{q}}^{\bar{\Delta}}(\bar{t})\right) \bar{\Delta}\bar{t}.
\end{equation*}
\end{definition}

\begin{remark}
Observe that in Definition~\ref{def:inva} we change time.
Thus, we consider the functional $I$ on many
different time scales, depending on $\varepsilon$ and $q(\cdot)$.
This is the reason for assuming that the Lagrangian $L$
is defined for all $t\in\R$ and not just $t$ from the initial time scale $\T$.
\end{remark}

\begin{theorem}[Noether's theorem]
\label{theo:tnoe} If functional $I$ is invariant on $U$, in the sense of Definition~\ref{def:inva}, then
\begin{multline}
\label{eq:TeoNet} C(t,q,q^\sigma,{q}^\Delta) =
\partial_{3} L\left(t,q^\sigma,{q}^\Delta\right)\cdot\xi(t,q)\\
+ \left[ L(t,q^\sigma,{q}^\Delta) - \partial_{3} L\left(t,q^\sigma,{q}^\Delta\right) \cdot {q}^\Delta -
\partial_{1} L\left(t,q^\sigma,{q}^\Delta\right) \cdot \mu(t) \right]\cdot
\tau(t,q)
\end{multline}
is a conservation law.
\end{theorem}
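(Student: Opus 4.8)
The guiding idea is to deduce Theorem~\ref{theo:tnoe} from Theorem~\ref{theo:tnaut} by the reparametrization technique: absorb the time change in \eqref{eq:tinf2} into an extra dependent variable, so that the time-transforming family becomes a state transformation of the form \eqref{eq:tinf} for an ``autonomized'' functional. Fix $q\in U$ and a value of $\varepsilon$, put $\alpha:=T_\varepsilon(\cdot,q(\cdot))$ (increasing and $\mathrm{C}^1_{rd}$ onto a time scale, by hypothesis), and set $\theta:=\alpha=t+\varepsilon\tau(t,q(t))+o(\varepsilon)$ and $\phi:=\bar q\circ\alpha=Q_\varepsilon(\cdot,q(\cdot))=q+\varepsilon\xi(t,q(t))+o(\varepsilon)$. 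First I would transport the right-hand integral of Definition~\ref{def:inva} back to $\T$ by the change-of-variables formula $\int_{\alpha(a)}^{\alpha(b)}f(\bar t)\,\bar\Delta\bar t=\int_a^b f(\alpha(t))\,\alpha^\Delta(t)\,\Delta t$, using $\bar q^{\bar\sigma}\circ\alpha=(\bar q\circ\alpha)^\sigma=\phi^\sigma$ (a consequence of \eqref{sigma}) and the chain rule $(\bar q\circ\alpha)^{\Delta}=(\bar q^{\bar\Delta}\circ\alpha)\,\alpha^\Delta$ valid for such reparametrizations. As in the proof of Theorem~\ref{theo:cnsi}, since the resulting equality holds on every subinterval the integrands must coincide, i.e.
\begin{equation*}
L\bigl(t,q^\sigma(t),q^\Delta(t)\bigr)=L\Bigl(\theta(t),\phi^\sigma(t),\frac{\phi^\Delta(t)}{\theta^\Delta(t)}\Bigr)\,\theta^\Delta(t)\quad\text{for all admissible }t\text{ and all }\varepsilon.
\end{equation*}
The right-hand side is exactly the value along $t\mapsto(\theta(t),\phi(t))$ of the Lagrangian $\widetilde L\bigl(t,(s^\sigma,w^\sigma),(s^\Delta,w^\Delta)\bigr):=L\bigl(s^\sigma-\mu(t)s^\Delta,\,w^\sigma,\,w^\Delta/s^\Delta\bigr)\,s^\Delta$ in the enlarged variable $(s,w)\in\R^{1+n}$ (I have used $f^\sigma=f+\mu f^\Delta$ to write the non-shifted first slot of $L$ through shifted quantities and $\mu$), and along the curve $(\mathrm{id},q)$ one has $\widetilde L=L(t,q^\sigma,q^\Delta)$; so the displayed identity says precisely that $\widetilde I$ is invariant on $\{(\mathrm{id},q):q\in U\}$, in the sense of Definition~\ref{def:inv}, under the state transformation $(s,w)\mapsto(s,w)+\varepsilon(\tau,\xi)+o(\varepsilon)$, which does not transform time.

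Next I would differentiate the displayed identity in $\varepsilon$ at $\varepsilon=0$ (the analogue here of Theorem~\ref{theo:cnsi}); using $\theta|_{\varepsilon=0}=t$, $\theta^\Delta|_{\varepsilon=0}=1$, $\phi|_{\varepsilon=0}=q$, and $f^\sigma=f+\mu f^\Delta$ applied to $t\mapsto\tau(t,q(t))$ to replace $\tau+\mu\tau^\Delta$ by $\tau^\sigma$, this yields the necessary condition of invariance
\begin{equation*}
\partial_1L\cdot\tau^\sigma+\partial_2L\cdot\xi^\sigma+\partial_3L\cdot\xi^\Delta+M\cdot\tau^\Delta=0,\qquad M:=L-\partial_3L\cdot q^\Delta-\mu(t)\,\partial_1L,
\end{equation*}
with $L$ and its partials evaluated at $(t,q^\sigma(t),q^\Delta(t))$ and $\tau^\sigma,\tau^\Delta$ defined from $\tau$ exactly as $\xi^\sigma,\xi^\Delta$ are from $\xi$ in Theorem~\ref{theo:cnsi}. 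A short computation shows $M=\partial_{s^\Delta}\widetilde L$ and $\partial_3L=\partial_{w^\Delta}\widetilde L$ along $(\mathrm{id},q)$, so the quantity $C=\partial_3L\cdot\xi+M\cdot\tau$ in \eqref{eq:TeoNet} is exactly the conservation law that Theorem~\ref{theo:tnaut} would produce for $\widetilde I$.

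It remains to verify that when $q$ satisfies the Euler--Lagrange equation \eqref{eq:el} the enlarged curve $(\mathrm{id},q)$ satisfies the Euler--Lagrange system of $\widetilde L$: the $w$-equations reduce to \eqref{eq:el} itself, and the single $s$-equation is the time-scale DuBois--Reymond (``energy'') identity $M^\Delta=\partial_1L$, that is,
\begin{equation*}
\frac{\Delta}{\Delta t}\Bigl[L\bigl(t,q^\sigma,q^\Delta\bigr)-\partial_3L\bigl(t,q^\sigma,q^\Delta\bigr)\cdot q^\Delta-\mu(t)\,\partial_1L\bigl(t,q^\sigma,q^\Delta\bigr)\Bigr]=\partial_1L\bigl(t,q^\sigma,q^\Delta\bigr).
\end{equation*}
This is the crux and, I expect, the only real difficulty: for $\mu\equiv0$ it is the classical law $\frac{d}{dt}(L-\partial_3L\,\dot q)=\partial_tL$, but on a general time scale it is delicate because $q$ is merely $\mathrm{C}^1_{rd}$, so $q^\Delta$ need not be delta differentiable and one cannot expand $\frac{\Delta}{\Delta t}(\partial_3L\cdot q^\Delta)$ naively by the product rule. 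I would obtain it either from the time-scale DuBois--Reymond necessary condition, or by rerunning Bohner's first-variation derivation of \eqref{eq:el} with a perturbation $t\mapsto t+\varepsilon h(t)$ of the independent variable, which delivers $M^\Delta=\partial_1L$ without ever differentiating $q^\Delta$. Granting it, the conclusion follows in the spirit of Theorem~\ref{theo:tnaut} applied to $\widetilde I$; concretely, combining the product rule $(fg)^\Delta=f^\Delta g^\sigma+fg^\Delta$, the Euler--Lagrange equation $(\partial_3L)^\Delta=\partial_2L$ and $M^\Delta=\partial_1L$,
\begin{align*}
\frac{\Delta}{\Delta t}C&=(\partial_3L)^\Delta\cdot\xi^\sigma+\partial_3L\cdot\xi^\Delta+M^\Delta\cdot\tau^\sigma+M\cdot\tau^\Delta\\
&=\partial_2L\cdot\xi^\sigma+\partial_3L\cdot\xi^\Delta+\partial_1L\cdot\tau^\sigma+M\cdot\tau^\Delta=0
\end{align*}
by the necessary condition above, so $C$ is a conservation law for $I$.
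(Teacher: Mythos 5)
Your proposal follows the same route as the paper: both autonomize the time change by introducing the extended Lagrangian $\tilde{L}(t;s,q;r,v)=L(s-\mu(t)r,q,v/r)\cdot r$, transport the transformed integral back to $\T$ via the change-of-variables formula together with \eqref{sigma} and the chain rule, conclude that the extended functional $\tilde{I}$ is invariant in the sense of Definition~\ref{def:inv} under the state transformation $(s,q)\mapsto (s,q)+\varepsilon(\tau,\xi)+o(\varepsilon)$, and then invoke Theorem~\ref{theo:tnaut} to read off \eqref{eq:TeoNet}. Your explicit necessary condition of invariance and your identifications $\partial_3 L=\partial_{w^\Delta}\tilde L$ and $M=\partial_{s^\Delta}\tilde L$ at $s(t)=t$ agree exactly with the paper's formulas \eqref{eq:dtn1} and \eqref{eq:dtn2}.

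The one substantive difference is that you make explicit a hypothesis the paper passes over in silence. Theorem~\ref{theo:tnaut} applied to $\tilde I$ gives $\frac{\Delta}{\Delta t}C=0$ only along pairs $(s,q)$ satisfying the full Euler--Lagrange system of $\tilde L$, whereas the theorem to be proved asserts conservation along $q$ satisfying \eqref{eq:el} alone; one must therefore check that $(\mathrm{id},q)$ is an extremal of $\tilde L$ whenever $q$ is an extremal of $L$. The $q$-components reduce to \eqref{eq:el}, but the extra $s$-component is the time-scale DuBois--Reymond identity $M^\Delta=\partial_1 L$ with $M=L-\partial_3L\cdot q^\Delta-\mu\,\partial_1L$, exactly as you say. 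You correctly identify this as the crux, but you do not prove it, so your argument as written is incomplete at precisely this point --- though the paper's is too, since it applies Theorem~\ref{theo:tnaut} to $\tilde I$ without comment on this hypothesis. The remedies you sketch (a time-scale DuBois--Reymond necessary condition, or rerunning the first variation with a perturbation of the independent variable) are the standard ways to close the gap, and your caution about not expanding $\frac{\Delta}{\Delta t}\left(\partial_3L\cdot q^\Delta\right)$ by the product rule for merely $\mathrm{C}^1_{rd}$ extremals is well taken: that step is also what guarantees $M$ is delta differentiable, so that your final display is meaningful. In short, this is the paper's proof presented more carefully, with one genuinely needed step honestly flagged but left open.
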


\begin{proof}
We will show that invariance of $I$ under \eqref{eq:tinf2} (in the sense of Definition~\ref{def:inva}) is
equivalent to invariance of another functional $\tilde{I}$ in the sense of Definition~\ref{def:inv}.

Let $\tilde{L}(t;s,q;r,v):=L(s-\mu(t)r,q,\frac{v}{r})\cdot r$ for
$q,v\in\mathbb{R}^{n}$, $t\in [a,b]$ and
$s,r\in\mathbb{R}$, $r\neq 0$. $\tilde{L}$ is a new Lagrangian with the state variable
$(s,q)\in\mathbb{R}^{n+1}$. Observe that for $s(t)=t$ and any $q:[a,b]\rightarrow\mathbb{R}^n$
\[ L(t,q^\sigma (t),q^\Delta (t))=
\tilde{L}(t;s^\sigma(t),q^\sigma (t);s^\Delta(t),q^\Delta(t))
\]
so for the functional \[\tilde{I}[s(\cdot),q(\cdot)]:= \int_a^b
\tilde{L}(t;s^\sigma(t),q^\sigma(t);s^\Delta(t),q^\Delta(t))\Delta t\] we get
$I[q(\cdot)]=\tilde{I}[s(\cdot),q(\cdot)]$ whenever $s(t)=t$.

Consider the group of transformations $(T_\varepsilon,Q_\varepsilon)$ given by \eqref{eq:tinf2} and let $q\in
U$. From the invariance of $I$, for $s(t)=t$, we get
\begin{equation*}
\begin{split}
\tilde{I}[s(\cdot),q(\cdot)]&=I\left[q(\cdot)\right]
= \int_a^b L\left(t,q^\sigma(t),{q}^\Delta(t)\right)\Delta t \\
&= \int_{\alpha(a)}^{\alpha(b)} L\left(\bar{t},(\bar{q}\circ
\bar{\sigma})(\bar{t}),\bar{q}^{\bar{\Delta}}(\bar{t})\right)\bar{\Delta}\bar{t} \\
&= \int_{a}^{b} L\left(\alpha(t),(\bar{q}\circ \bar{\sigma}\circ\alpha)(t),
\bar{q}^{\bar{\Delta}}(\alpha(t))\right)\alpha^\Delta(t)\Delta{t} \\
&= \int_{a}^{b} L\left(\alpha^\sigma(t)-\mu(t)\alpha^\Delta(t),(\bar{q}\circ\alpha)^\sigma(t),
\frac{(\bar{q}\circ\alpha)^{\Delta}(t)}{\alpha^{\Delta}(t)}\right)\alpha^\Delta(t)\Delta{t} \\
&= \int_{a}^{b} \tilde{L}\left(t;\alpha^\sigma(t),(\bar{q}\circ\alpha)^\sigma(t);
\alpha^\Delta(t),(\bar{q}\circ\alpha)^\Delta(t))\right)\Delta t \\
&= \tilde{I}[\alpha(\cdot),(\bar{q}\circ\alpha)(\cdot)].
\end{split}
\end{equation*}

Observe that for $s(t)=t$
\[ (\alpha(t),(\bar{q}\circ\alpha)(t))= (T_\varepsilon(t,q(t)),
Q_\varepsilon(t,q(t)))= (T_\varepsilon(s(t),q(t)), Q_\varepsilon(s(t),q(t))). \]

 This means that $\tilde{I}$ is invariant on
$\tilde{U}=\{ (s,q)\ | \ s(t)=t, \, q\in U \}$ under the group of state transformations
\[    (\bar{s},\bar{q})= (T_\varepsilon(s,q),Q_\varepsilon(s,q)) \]
in the sense of Definition~\ref{def:inv}.
Applying Theorem~\ref{theo:tnaut}, we obtain that for $s(t)=t$
\begin{multline} \label{eq:dtn}
C\left(t,s,q,s^\sigma,q^\sigma,s^\Delta,q^\Delta\right)=\partial_{5}
\tilde{L}(t;s^\sigma,q^\sigma;s^\Delta,q^\Delta)\cdot\xi(s,q)
\\ + \partial_{4}\tilde{L}(t;s^\sigma,q^\sigma;s^\Delta,q^\Delta)
\cdot\tau(s,q)
\end{multline}
is a conservation law. Since
\begin{equation*}
\partial_{5} \bar{L}(t;s^\sigma,q^\sigma;s^\Delta,q^\Delta) =
\partial_{3} L\left(s^\sigma-\mu(t)s^\Delta,q^\sigma,\frac{q^\Delta}{s^\Delta}\right)
\end{equation*}
and
\begin{multline*}
\partial_{4} \bar{L}(t;s^\sigma,q^\sigma;s^\Delta,q^\Delta) =
-\partial_{1} L\left(s^\sigma-\mu(t)s^\Delta,q^\sigma,\frac{q^\Delta}{s^\Delta}\right)
\cdot\mu(t)\cdot s^\Delta\\
 -\partial_{3}
L\left(s^\sigma-\mu(t)s^\Delta,q^\sigma,\frac{q^\Delta}{s^\Delta}\right)
\cdot\frac{q^\Delta}{s^\Delta}+L\left(s^\sigma-\mu(t)s^\Delta,q^\sigma,\frac{q^\Delta}{s^\Delta}\right) \, ,
\end{multline*}
for $s(t)=t$ we get
\begin{equation}
\label{eq:dtn1}
\partial_{5} \bar{L}(t;s^\sigma,q^\sigma;s^\Delta,q^\Delta) =
\partial_{3} L\left(t,q^\sigma,q^\Delta\right)
\end{equation}
and
\begin{multline}\label{eq:dtn2}
\partial_{4} \bar{L}(t;s^\sigma,q^\sigma;s^\Delta,q^\Delta) \\
= L(t,q^\sigma,q^\Delta)-\partial_{3} L\left(t,q^\sigma,q^\Delta\right) \cdot q^\Delta-\partial_{1}
L\left(t,q^\sigma,q^\Delta\right) \cdot\mu(t).
\end{multline}
Substituting \eqref{eq:dtn1} and \eqref{eq:dtn2} into \eqref{eq:dtn} we arrive to the intended conclusion
\eqref{eq:TeoNet}.
\end{proof}

Assuming that $\mathbb{T}=\mathbb{R}$
the formula \eqref{eq:TeoNet} simplifies due to the fact that $\mu\equiv 0$,
and we obtain the classical Noether's theorem:

\begin{corollary}
Let $\mathbb{T}=\mathbb{R}$. If functional $I$ is
invariant on $U$, in the sense of Definition~\ref{def:inva}, then
\begin{equation*}
C(t,q,{q}') =
\partial_{3} L\left(t,q,{q}'\right)\cdot\xi(t,q)
+ \left[ L(t,q,{q}') - \partial_{3} L\left(t,q,{q}'\right) \cdot {q}' \right] \cdot \tau(t,q)
\end{equation*}
is a conservation law.
\end{corollary}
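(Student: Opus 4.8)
The plan is to specialize Theorem~\ref{theo:tnoe} to the time scale $\mathbb{T} = \mathbb{R}$ and simplify the general conservation law \eqref{eq:TeoNet} using the structural facts recorded in Example~1 and the Remark following it. First I would recall that for $\mathbb{T} = \mathbb{R}$ one has $\sigma(t) = t$ for every $t$, hence $q^\sigma(t) = (q\circ\sigma)(t) = q(t)$, and likewise $\xi^\sigma = \xi$; moreover the graininess vanishes identically, $\mu(t) \equiv 0$. By the Remark after Definition~2, the delta derivative coincides with the ordinary derivative, so $q^\Delta(t) = q'(t)$ and $\frac{\Delta}{\Delta t}$ is just $\frac{d}{dt}$. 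In particular the notion of conservation law in Definition~\ref{def:leicon} reduces to the classical requirement that $\frac{d}{dt} C = 0$ along extremals.

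Next I would invoke Theorem~\ref{theo:tnoe}: under invariance of $I$ on $U$ in the sense of Definition~\ref{def:inva}, the quantity \eqref{eq:TeoNet} is a conservation law. Substituting $\mu(t) \equiv 0$ into \eqref{eq:TeoNet} annihilates the term $\partial_1 L \cdot \mu(t)$ entirely, and replacing $q^\sigma$ by $q$ and $q^\Delta$ by $q'$ throughout turns the bracketed expression into $L(t,q,q') - \partial_3 L(t,q,q')\cdot q'$ and the first term into $\partial_3 L(t,q,q')\cdot\xi(t,q)$. This is precisely the asserted formula for $C(t,q,q')$, which completes the argument.

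There is essentially no obstacle here: the corollary is a direct substitution, and the only point requiring a word of care is the consistency check that the hypothesis of Definition~\ref{def:inva} still makes sense when $\mathbb{T} = \mathbb{R}$ — namely that the reparameterized map $\alpha(t) = T_\varepsilon(t,q(t))$ remains an increasing $\mathrm{C}^1$ function whose image is again a time scale (here, again a real interval). This is automatic for $\varepsilon$ small, since $\alpha^\Delta = 1 + \varepsilon\,\frac{d}{dt}\tau(t,q(t)) + o(\varepsilon) > 0$, so the reduction to Theorem~\ref{theo:tnoe} is legitimate and nothing further needs to be proved.
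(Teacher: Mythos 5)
Your proposal is correct and matches the paper's own argument, which likewise obtains the corollary by specializing Theorem~\ref{theo:tnoe} to $\mathbb{T}=\mathbb{R}$, where $\mu\equiv 0$, $\sigma(t)=t$, and $q^\Delta=q'$, so the term $\partial_1 L\cdot\mu(t)$ drops out of \eqref{eq:TeoNet}. Nothing further is needed.
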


\begin{remark}
In classical mechanics, the term
$L(t,q,{q}') - \partial_{3} L\left(t,q,{q}'\right) \cdot {q}'$
is interpreted as the energy.
\end{remark}

For the discrete-time case ($\mathbb{T}=\mathbb{Z}$),
we obtain a new version of Noether's theorem which generalizes
the result in \cite{Torres03}:

\begin{corollary}
\label{cor:NT:Disc}
Let $\mathbb{T}=\mathbb{Z}$. If functional $I$ is invariant on $U$,
in the sense of Definition~\ref{def:inva}, then
\begin{multline*}
C(t,q,q^+,{\Delta q}) =
\partial_{3} L\left(t,q^+,{\Delta q}\right)\cdot\xi(t,q) \\
+ [ L(t,q^+,{\Delta q}) - \partial_{3} L\left(t,q^+,{\Delta q}\right) \cdot {\Delta q}
 -\partial_{1} L\left(t,q^+,{\Delta q}\right) ]\cdot
\tau(t,q)
\end{multline*}
is a conservation law, where $q^+(t)=q(t+1)$ and $\Delta q=q^+-q$.
\end{corollary}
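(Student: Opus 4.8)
The plan is to derive this statement as a direct specialization of Theorem~\ref{theo:tnoe} to the time scale $\mathbb{T}=\mathbb{Z}$. First I would recall from the Example in Section~2 that on $\mathbb{Z}$ one has $\sigma(t)=t+1$, $\rho(t)=t-1$ and $\mu(t)\equiv 1$, and from the Remark that every $q:\mathbb{Z}\to\mathbb{R}^n$ is delta differentiable with $q^\Delta(t)=q(t+1)-q(t)$. Hence, with the notation $q^+(t):=q(t+1)$ and $\Delta q:=q^+-q$ fixed in the statement, we have the identifications $q^\sigma=q^+$ and $q^\Delta=\Delta q$; moreover $\mathcal{C}^1_{rd}$ coincides with the set of all $\mathbb{R}^n$-valued sequences, so the admissibility requirement on $U$ imposes nothing beyond being defined on $[a,b]\cap\mathbb{Z}$.

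Next I would check that the standing hypotheses preceding Definition~\ref{def:inva} are meaningful and automatically (or generically) satisfied here, so that invariance of $I$ in the sense of Definition~\ref{def:inva} makes sense on $\mathbb{Z}$. Given $q\in U$ and $\varepsilon$, the map $t\mapsto\alpha(t)=T_\varepsilon(t,q(t))$ is a function on the discrete time scale $\mathbb{Z}$, hence trivially $\mathrm{C}^1_{rd}$; assuming it is increasing (which holds for $|\varepsilon|$ small, since $T_\varepsilon$ is a perturbation of the identity), its image is a strictly increasing sequence, in particular a closed subset of $\mathbb{R}$, i.e. a bona fide time scale $\bar{\mathbb{T}}$ carrying its own forward jump operator $\bar\sigma$ and delta derivative $\bar\Delta$. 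Thus the reparameterization argument used in the proof of Theorem~\ref{theo:tnoe} goes through verbatim, and relation~\eqref{sigma} holds as before.

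Finally, invoking Theorem~\ref{theo:tnoe}, the quantity~\eqref{eq:TeoNet} is a conservation law for $I$ on $U$; substituting $\mu(t)=1$, $q^\sigma=q^+$ and $q^\Delta=\Delta q$ into~\eqref{eq:TeoNet} yields exactly the displayed expression for $C(t,q,q^+,\Delta q)$. By Definition~\ref{def:leicon}, being a conservation law means $\frac{\Delta}{\Delta t}C=0$ along solutions of~\eqref{eq:el}, which on $\mathbb{Z}$ reads $C(t+1,q(t+1),q(t+2),\Delta q(t+1))-C(t,q(t),q(t+1),\Delta q(t))=0$; that is, $C$ is constant along the discrete Euler-Lagrange extremals, which is the assertion of the corollary.

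I expect the only delicate point — rather than a genuine obstacle — to be the verification in the middle step that the time-scale apparatus degenerates consistently on $\mathbb{Z}$: one must make sure that the reparametrized problem still lives on a legitimate time scale and that none of the regularity assumptions (increasing $\mathrm{C}^1_{rd}$ reparameterization, $\mathrm{C}^1$ Lagrangian defined on all of $\mathbb{R}$) silently fails. Once that is in place, the corollary reduces to the substitution carried out in the last paragraph.
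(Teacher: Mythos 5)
Your proposal is correct and follows exactly the route the paper intends: the corollary is a direct specialization of Theorem~\ref{theo:tnoe} to $\mathbb{T}=\mathbb{Z}$, obtained by substituting $\mu(t)\equiv 1$, $q^\sigma=q^+$ and $q^\Delta=\Delta q$ into~\eqref{eq:TeoNet}. The additional checks you make (that $\alpha$ is $\mathrm{C}^1_{rd}$ and its image is a genuine time scale on $\mathbb{Z}$) are sound and merely make explicit what the paper leaves implicit.
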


We finish with an example of a conservation law
on a discrete but nonhomogeneous time scale
(graininess is not constant).

\begin{example}
Let $\mathbb{T}=\{ 2^n : n\in\mathbb{N}\cup\{0\} \}$ and
\[ L(t,q^\sigma, q^\Delta)=\frac{(q^\sigma)^2}{t} + t(q^\Delta)^2 \]
for $q\in\mathbb{R}$. It can be shown that the functional $I$
is invariant under the family of transformations:
\[ \bar{t}=te^\varepsilon=t+t\varepsilon + o(\varepsilon),\ \bar{q}=q. \]
Then, Noether's theorem generates the following conservation law:
\[ C(t,q^\sigma,q^\Delta)=2\left[\frac{(q^\sigma)^2}{t} - t(q^\Delta)^2\right]\cdot t. \]
\end{example}


\section*{Acknowledgments}

Zbigniew Bartosiewicz was supported by Bia\l ystok Technical University 
grant W/WI/1/07; Delfim F. M. Torres by the R\&D unit 
``Centre for Research in Optimization and Control'' (CEOC) 
of the University of Aveiro, cofinanced
by the European Community Fund FEDER/POCI 2010.



\end{document}